\documentclass[a4paper,12pt]{article}
\usepackage{amssymb}
\textwidth=15.5cm 
\topmargin=0cm 
\baselineskip=16pt
\hoffset=-1cm
\usepackage{latexsym}
\usepackage{amsmath,amsthm,amssymb}
\usepackage{amsfonts}
\usepackage{eufrak}

\newtheorem{thm}{Theorem}[section]
\newtheorem{lem}[thm]{Lemma}

\theoremstyle{definition}
\newtheorem{defn}[thm]{Definition}
\newtheorem{exa}[thm]{Example}
\newtheorem{rem}[thm]{Remark}

\begin{document}

\begin{center}
{\Large On norms in some class of exponential type Orlicz spaces of random variables}
\end{center}
\begin{center}
{\sc Krzysztof Zajkowski}\\
Institute of Mathematics, University of Bialystok \\ 
Ciolkowskiego 1M, 15-245 Bialystok, Poland \\ 
kryza@math.uwb.edu.pl 
\end{center}

\begin{abstract}
A new characterization of the exponential type Orlicz spaces generated by the functions $\exp(|x|^p)-1$ ($p\ge 1$) is given. 
We define norms for centered random variables belonging to these spaces. We show equivalence of these norms with the Luxemburg norms.
On the example of Hoeffding's inequality we present some application of these norms in a probabilistic context. 

\end{abstract}

{\it 2010 Mathematics Subject Classification: 46E30, 
60E15} 

{\it Key words: Orlicz spaces of exponential type, Luxemburg norms, convex conjugates, Hoeffding inequality} 

\section{Introduction}
The most important class of exponential type Orlicz spaces form spaces generated by the functions $\psi_p(x)=\exp(|x|^p)-1$ ($p\ge 1$). In probability theory, for $p=1$, we have the space of sub-exponential random variables and, for $p=2$, the space of sub-gaussian random variables. 

Spaces of this type also appear naturally in  asymptotic geometric analysis (see the monograph of Arstein-Avidan et al. \cite{A-AGM}), especially, in the context of concentration of measures (see \cite[Par. 3.5.2]{A-AGM}) and \cite{GPV}, for instance), convex geometry (see, e.g., \cite{Bour, BM, HPTh}) and also in  information-based complexity (see Hinrichs et al. \cite{HPU}). 

Let us recall that in probability theory, for any $p\ge 1$, the Orlicz space $L_{\psi_p}$ consists of all random variables $X$  on a probability space  for which the Luxemburg norm 
$$
\|X\|_{\psi_p}:=\inf \big\{K>0:\; \mathbb{E}\exp(|X/K|^p)\le 2\big\}
$$
is finite (see Luxemburg \cite{Lux}). 
Let us note that $L_{\psi_{p_1}}\subset L_{\psi_{p_2}}$, if $p_1\ge p_2$, and moreover $L_\infty\subset L_{\psi_p} \subset L_r$ ($p,r\ge 1$), where $L_\infty,\;L_r$ denote the classical Lebesgue spaces. In other words, the  spaces $L_{\psi_p}$ form an increasing family, by decreasing $p$, smaller than all of $L_p$-spaces but larger than the space of bounded random variables $L_\infty$.

Estimates of the $\psi_\alpha$-norm play often an important role in solving problems of classical probability theory and asymptotic geometric analysis.  
This can be quite challenging and delicate task, and therefore 
any alternative form of this norm may be useful.
It is known an equivalent expression of the $\psi_p$-norms in terms of the classical Lebesgue norms 
as
$\sup_{r\ge p}r^{-1/p}\|X\|_{L_r}$; see \cite[Lem. 3.5.5]{A-AGM}.  Some modification of the $\psi_\alpha$-norm one can find in Dick et al. \cite{DHPP}.  
In this paper I would like to present an equivalent form of the $\psi_p$-norms on spaces of centered random variables.

Let us recall that centered sub-gaussian random variables have another important classical characterization (see Kahane \cite{Kahane}): a random variable $X$ 
is sub-gaussian if there exists a positive constant $K$ such that $\mathbb{E}\exp(tX)\le \exp(K^2t^2/2)$ for all $t\in\mathbb{R}$. In other words when the moment generating function of $X$ is majorized by the moment generating function of a centered Gaussian variable $g$ with the variance $K^2$;
recall that $\mathbb{E}\exp(tg)=\exp(K^2t^2/2)$ for $g\sim\mathcal{N}(0,K^2)$. 
Estimates for moments generating functions play a crucial role in the theory of concentration inequalities as in the proofs of Hoeffding's \cite{Hoeff} and Azuma's \cite{Azuma} inequalities, for instance.

It is possible to introduce another  norm, equivalent to $\|\cdot\|_{\psi_2}$ in the space of centered sub-gaussian random variables, of the form
$$
\tau(X):=\inf\big\{K>0:\;\forall_{t\in\mathbb{R}}\;\mathbb{E}\exp(tX)\le \exp(K^2t^2/2)\big\};
$$
see Buldygin and Kozachenko \cite[Th. 1.1.2]{BulKoz}. In Vershynin \cite[Prop. 2.5.2]{Ver} one can find the proof
of equivalence of
the norms $\|\cdot\|_{\psi_2}$ and $\tau(\cdot)$.

In general it is a problem to define norms in the above form for any $p\ge 1$, because functions that majorize of the logarithm of the moment generating functions 
must be not only $C$-functions (see \cite[Def. 2.2.1]{BulKoz}) but must also satisfy a quadratic condition (to be quadratic functions in a neighborhood of zero; compare Buldygin and Kozachenko \cite[Def. 2.4.1]{BulKoz}).

In Definition \ref{phip} we propose a standardization of the function $|x|^p$ which is equivalent and of the form: $\varphi_p(x)=x^2/2$ if $|x|\le 1$ and  $\varphi_p(x)=|x|^p/p-1/p+1/2$ if $|x|>1$. 
Similarly as in the sub-gaussian case, one can define 
a norm of centered random variables as follows:
$$
\tau_{\varphi_p}(X):=\inf\{K>0:\;\forall_{t\in\mathbb{R}}\;\mathbb{E}\exp(tX)\le \exp\varphi_q(Kt)\}\quad(q=p/(p-1)).
$$
Let us note that for $p=2$ the norm $\tau_{\varphi_2}$ coincides with $\tau$. 

Let $L_{\psi_p}^0$ denote the space of centered random variables belonging to $L_{\psi_p}$, i.e., $L_{\psi_p}^0:=\{X\in L_{\psi_p}:\;\mathbb{E}X=0\}$.
In the main Theorem \ref{mgf} we show equivalence of the norms $\|\cdot\|_{\psi_p}$ and $\tau_{\varphi_p}(\cdot)$ on $L_{\psi_p}^0$. 



\section{Norms on  $L_{\psi_p}^0$-spaces}
Before we pass to a more detailed characterization of the norm $\tau_{\varphi_p}$, we first present a certain lemma about  the elements in the spaces 
$L_{\psi_p}$.
\begin{lem}
\label{charlem}
Let $X$ be a random variable and $p\ge 1$. There exist positive constants $K,L,M$ 
such that 
the following conditions are equivalent:\\
 1. 
 $\mathbb{E}\exp\{|X/K|^p\}\le 2$;\\
 2. 
 $\mathbb{P}(|X|\ge t) \le 2\exp\{-(t/L)^p\}$ for all $t \ge 0$;\\ 
 3. 
 $\mathbb{E}|X|^\alpha\le  2M^\alpha\Gamma\big(\frac{\alpha}{p}+1\big)$ for all $\alpha\ge 1$.
\end{lem}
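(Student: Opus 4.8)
The plan is to prove the equivalence through the cyclic chain of implications $1\Rightarrow 2\Rightarrow 3\Rightarrow 1$, tracking at each step how the constant assumed in the hypothesis determines the constant appearing in the conclusion. Because each implication produces the next constant as a fixed multiple (depending only on $p$) of the previous one, closing the loop yields the stated equivalence. For $1\Rightarrow 2$ I would simply apply Markov's inequality to the nonnegative variable $\exp\{|X/K|^p\}$: for every $t\ge 0$,
$$
\mathbb{P}(|X|\ge t)=\mathbb{P}\big(\exp\{|X/K|^p\}\ge\exp\{(t/K)^p\}\big)\le\frac{\mathbb{E}\exp\{|X/K|^p\}}{\exp\{(t/K)^p\}}\le 2\exp\{-(t/K)^p\},
$$
so condition 2 holds with $L=K$.

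For $2\Rightarrow 3$ I would use the layer-cake identity $\mathbb{E}|X|^\alpha=\int_0^\infty\alpha t^{\alpha-1}\mathbb{P}(|X|\ge t)\,dt$, insert the tail bound from condition 2, and evaluate the integral by the substitution $u=(t/L)^p$. This recasts the integral as a Gamma function and gives
$$
\mathbb{E}|X|^\alpha\le\frac{2\alpha L^\alpha}{p}\int_0^\infty u^{\alpha/p-1}e^{-u}\,du=2L^\alpha\,\frac{\alpha}{p}\,\Gamma\Big(\frac{\alpha}{p}\Big)=2L^\alpha\,\Gamma\Big(\frac{\alpha}{p}+1\Big),
$$
where the last equality is the functional equation $\frac{\alpha}{p}\Gamma(\alpha/p)=\Gamma(\alpha/p+1)$. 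Hence condition 3 holds with $M=L$.

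The implication $3\Rightarrow 1$ is where the real work lies, and I expect it to be the main obstacle. I would expand the exponential into its power series and interchange expectation with summation, which is legitimate by Tonelli's theorem since all terms are nonnegative:
$$
\mathbb{E}\exp\{|X/K|^p\}=1+\sum_{n=1}^\infty\frac{\mathbb{E}|X|^{pn}}{K^{pn}\,n!}.
$$
Applying condition 3 with $\alpha=pn\ge p\ge 1$ yields $\mathbb{E}|X|^{pn}\le 2M^{pn}\Gamma(n+1)=2M^{pn}n!$, and the decisive feature of the standardization by $\Gamma(\alpha/p+1)$ is that this factor $n!$ cancels exactly the factorial in the exponential series. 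What remains is a geometric series,
$$
\mathbb{E}\exp\{|X/K|^p\}\le 1+2\sum_{n=1}^\infty\Big(\frac{M}{K}\Big)^{pn}=1+\frac{2(M/K)^p}{1-(M/K)^p},
$$
and choosing $K$ with $(M/K)^p\le 1/3$, that is $K\ge 3^{1/p}M$, drives the right-hand side below $2$ and establishes condition 1. The only delicate point is guaranteeing convergence of this series and making the threshold attainable, which is precisely the role of the constant constraint $K\ge 3^{1/p}M$; the cancellation of factorials, rather than any analytic subtlety, is the crux of the matter.
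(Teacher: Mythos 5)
Your proposal is correct and follows essentially the same route as the paper: the cyclic chain $1\Rightarrow 2\Rightarrow 3\Rightarrow 1$ via Markov's inequality, the layer-cake formula with the substitution $u=(t/L)^p$, and the series expansion in which $\Gamma(n+1)$ cancels $n!$ to leave a geometric series, with the same constants $L=K$, $M=L$, and $K\ge 3^{1/p}M$. (Your computation of the Gamma-function prefactor in step $2\Rightarrow 3$ is in fact cleaner than the paper's, which contains a harmless typo $\alpha/2$ in place of $\alpha/p$.)
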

\begin{proof}
$1. \Rightarrow 2.$
Using the exponential Markov inequality and then property 1 we get
$$
\mathbb{P}(|X|\ge t)=\mathbb{P}\Big(e^{|X/K|^p}\ge e^{(t/K)^p}\Big)\le e^{-(t/K)^p}\mathbb{E}e^{|X/K|^p}\le 2e^{-(t/K)^p}.
$$
It means that assuming property 1, property 2 follows with $L=K$.

$2. \Rightarrow 3.$ Using property 2 we get 
$$
 \mathbb{E}|X|^\alpha =\int_0^\infty \mathbb{P}(|X|\ge t)\alpha t^{\alpha-1}dt \le 2\alpha\int_0^\infty e^{-(t/L)^p} t^{\alpha-1}dt.
$$
After change of variables $u = (t/L)^p$ we obtain that
$$ 
2\alpha\int_0^\infty e^{-(t/L)^p} t^{\alpha-1}dt \le 2L^\alpha\frac{\alpha}{p}\int_0^\infty e^{-u} u^{\frac{\alpha}{p}-1}du=2L^\alpha
\Gamma\Big(\frac{\alpha}{p}+1\Big).
$$
Property 3 follows with $M=L$ and in consequence equals $K$.

$3. \Rightarrow 1.$ 
Using the Taylor expansion for the exponential function, Lebesgue's monotone convergence theorem and property 3 we obtain the following estimate
$$
\mathbb{E}e^{|X/K|^p} = 1 +
\sum_{\alpha=1}^\infty
\frac{\mathbb{E}|X/K|^{p\alpha}}{\alpha!} \le 1 +
2\sum_{\alpha=1}^\infty
\frac{M^{p\alpha}\Gamma(\alpha+1)}{K^{p\alpha}\alpha!}=1+2\sum_{\alpha=1}^\infty \Big(\frac{M^p}{K^p}\Big)^{\alpha},
$$
which is less or equal 2 for $K\ge 3^{1/p}M$.
\end{proof}
\begin{rem}
\label{mrem}
Condition 1 in the above lemma means that $X$ belongs to the exponential type Orlicz space $L_{\psi_p}$ and its norm satisfies the bound $\|X\|_{\psi_p}\le K$.
Moreover if  condition 2 is satisfied with some constant $L$, then $\|X\|_{\psi_p}\le 3^{1/p}L$.
\end{rem}



Let us define some family of functions on $\mathbb{R}$. 
\begin{defn}
\label{phip}
Let for $p \ge 1$
$$
\varphi_p(x)=\left\{
\begin{array}{ccl}
\frac{x^2}{2}, & {\rm if} & |x|\le 1,\\
\frac{1}{p}|x|^p-\frac{1}{p}+\frac{1}{2}, & {\rm if} & |x|>1.
\end{array}
\right.
$$
We define $\varphi_\infty(x)=x^2/2$ for $|x|\le 1$ and $\varphi_\infty(x)=\infty$ if $|x|>1$.
\end{defn} 
The functions $\varphi_p$ for $p\in [1,\infty)$ are examples of  $C$-functions 
that satisfy the quadratic condition. 

Let us recall now the notion of equivalence of functions.
Let  $f_1$ and $f_2$ be even real-valued functions. We say that $f_1$ is weaker than $f_2$ ($f_1\prec f_2$) if there exist $x_0\ge 0$ and $k>0$ such that 
$f_1(x)\le f_2(kx)$ for all $x\ge x_0$. Functions $f_1$ and $f_2$ are said to be equivalent if   $f_1\prec f_2$ and $f_2\prec f_1$ (compare \cite[Def. 2.2.5]{BulKoz}).
\begin{rem}
Let us observe that if $\varphi_1$ and $\varphi_2$ are equivalent $C$-functions, then the $C$-functions $\exp\circ\varphi_1-1$ and $\exp\circ\varphi_2-1$ are equivalent.
Moreover, let us emphasize  that equivalent $C$-functions generate the same Orlicz space; see Krasnoselskii and Rutickii \cite[Thm. 2.8.1]{KraRu}.
\end{rem}

\begin{lem}
The functions $|\cdot|^p$ and $\varphi_p(\cdot)$ are equivalent for all $p\ge 1$.
\end{lem}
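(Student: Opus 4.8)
The plan is to unwind the definition of equivalence directly. Since both $|\cdot|^p$ and $\varphi_p$ are even, it suffices to argue for $x\ge 0$, and because $\prec$ only constrains values beyond some threshold $x_0$, I can restrict attention to $x>1$, where $\varphi_p(x)=\tfrac1p x^p-\tfrac1p+\tfrac12$. The crucial observation is that on this range $\varphi_p$ is an affine function of $x^p$ with strictly positive leading coefficient $1/p$; so the two functions differ only by a multiplicative constant (which a scaling $k$ can absorb) and an additive constant (which is swallowed once $x$ is large). I would then establish the two relations $|\cdot|^p\prec\varphi_p$ and $\varphi_p\prec|\cdot|^p$ separately, each with $x_0=1$.

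For $|\cdot|^p\prec\varphi_p$ I would choose the scaling $k=(2p)^{1/p}$, so that $k^p/p=2$ and $kx>1$ whenever $x\ge 1$. Then for $x\ge 1$
$$
\varphi_p(kx)=\tfrac1p(kx)^p-\tfrac1p+\tfrac12=2x^p-\tfrac1p+\tfrac12\ge x^p,
$$
the last inequality being equivalent to $x^p\ge \tfrac1p-\tfrac12$, which holds for all $x\ge 1$ since $\tfrac1p-\tfrac12\le\tfrac12<1\le x^p$. For the reverse relation $\varphi_p\prec|\cdot|^p$ I would simply take $k=1$ and check $\varphi_p(x)\le x^p$ for $x\ge 1$; this rearranges to $(1-\tfrac1p)x^p\ge\tfrac12-\tfrac1p$, and because $1-\tfrac1p\ge 0$ while the right-hand side is bounded above by $\tfrac12$, the inequality again follows for $x\ge 1$. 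Combining the two gives the claimed equivalence.

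I do not expect a genuine obstacle here: the computation is routine once the affine-in-$x^p$ structure is noted. The only points deserving a line of care are the borderline case $p=1$, where the leading coefficients $1/p$ and $1$ coincide so that the estimate $\varphi_1(x)=x-\tfrac12\le x$ leans entirely on the negative additive constant, and the bookkeeping check that the scaled argument $kx$ actually exceeds $1$ so that the upper branch of the definition of $\varphi_p$ applies.
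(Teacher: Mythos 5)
Your proof is correct, and like the paper's it is a routine direct verification; the difference is only in the bookkeeping. The paper compares $\varphi_p$ with the constant multiples $|x|^p/p$ and $|x|^p/2$ and splits into the cases $p\ge 2$ and $1\le p\le 2$ (those particular two-sided bounds are the ones reused later in the proof of Theorem \ref{mgf}), whereas you scale the \emph{argument} --- $k=(2p)^{1/p}$ in one direction, $k=1$ in the other --- which matches the stated definition of $\prec$ literally (it asks for $f_1(x)\le f_2(kx)$, not $f_1(x)\le c\,f_2(x)$) and lets a single computation with $x_0=1$ cover all $p\ge 1$ without a case split. Both routes are sound; yours is arguably the more faithful reading of the definition, the paper's has the side benefit of producing inequalities needed elsewhere. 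One small point of care: your justification of $(1-\tfrac1p)x^p\ge\tfrac12-\tfrac1p$, namely that the coefficient is nonnegative and the right-hand side is at most $\tfrac12$, is not by itself conclusive (a nonnegative coefficient times $x^p$ need not exceed a positive bound); the clean one-liner is $(1-\tfrac1p)x^p\ge 1-\tfrac1p\ge\tfrac12-\tfrac1p$ for $x\ge1$. The inequality you need is true, so this is a presentational fix, not a gap.
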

\begin{proof}
In the case of $p\ge2$, the function $|x|^p/p\le x^2/2=\varphi_p(x)$ for $|x|\le 1$ and $|x|^p/p\le |x|^p/p-1/p+1/2=\varphi_p(x)$ for $|x|\ge 1$. That is
\begin{equation}
\label{eqn1}
|x|^p/p\le \varphi_p(x)\quad {\rm for\;all}\;x\in\mathbb{R}.
\end{equation}
Let us observe now that $|x|^p/2\ge |x|^p/p$ for all $x\in\mathbb{R}$ but
\begin{equation}
\label{eqn2}
|x|^p/2\ge |x|^p/p-1/p+1/2=\varphi_p(x)\quad {\rm only\; for}\; |x|\ge 1.
\end{equation}
The inequalities (\ref{eqn1}) and (\ref{eqn2}) mean that for $p\ge 2$ the functions $|\cdot|^p$ and $\varphi_p(\cdot)$ are equivalent. In the case of $1\le p \le 2$, conversely, but similarly as above, one can show that
\begin{equation}
\label{eqn3}
|x|^p/p\ge \varphi_p(x)\quad {\rm for\;all}\;x\in\mathbb{R}.
\end{equation}
and
\begin{equation}
\label{eqn4}
|x|^p/2\le |x|^p/p-1/p+1/2=\varphi_p(x)\quad {\rm for}\; |x|\ge 1.
\end{equation}
It means that  the functions $|\cdot|^p$ and $\varphi_p(\cdot)$ are equivalent for all $p\ge 1$.


\end{proof}

It is known that the convex conjugate (the Legendre-Fenchel transform) of $|\cdot|^p/p$ is equal to $|\cdot|^q/q$, where $1/p+1/q=1$. We show that also  the convex conjugate $\varphi_p^\ast=\varphi_q$ even for $p=1$ and $q=\infty$.

\begin{lem}
\label{conjphi}
Let $p\ge 1$. Then $\varphi_p^\ast=\varphi_q$, where $1/p+1/q=1$ $(\varphi_1^\ast=\varphi_\infty)$. 
\end{lem}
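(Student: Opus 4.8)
The plan is to compute the Legendre--Fenchel transform directly from Definition \ref{phip}, that is, to evaluate $\varphi_p^\ast(y)=\sup_{x\in\mathbb{R}}\{xy-\varphi_p(x)\}$. Since $\varphi_p$ is even, so is $\varphi_p^\ast$, hence it suffices to treat $y\ge 0$; and for $y\ge 0$ the supremum over $x$ is attained at some $x\ge 0$, so I may maximize $g(x)=xy-\varphi_p(x)$ over $x\ge 0$ only. Note that $\varphi_p$ is convex and in fact $C^1$ (the two derivative formulas, $x$ and $x^{p-1}$, agree at $x=1$), so $g$ is concave on $[0,\infty)$ with a unique maximizer; this lets me locate the maximizer by setting $g'=0$ in each piece and checking which piece it falls in.

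Concretely, on $[0,1]$ one has $g(x)=xy-x^2/2$ with stationary point $x=y$, admissible exactly when $y\le 1$ and then giving value $y^2/2=\varphi_q(y)$. On $(1,\infty)$ one has $g(x)=xy-x^p/p+1/p-1/2$ with stationary point solving $x^{p-1}=y$, i.e.\ $x=y^{1/(p-1)}$, which lies in the valid range precisely when $y>1$. The key algebraic simplification is to feed the relation $x^{p-1}=y$ back into $g$: then $xy=x\cdot x^{p-1}=x^p=y^{p/(p-1)}=y^q$, using $p/(p-1)=q$, and the maximal value collapses to $y^q-y^q/p+1/p-1/2=y^q/q+1/p-1/2$.

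The place where the conjugacy relation $1/p+1/q=1$ genuinely enters, and what I regard as the crux, is matching the additive constant: I must check that $1/p-1/2=1/2-1/q$, which is exactly $1/p+1/q=1$, so that $y^q/q+1/p-1/2=y^q/q-1/q+1/2=\varphi_q(y)$ for $y>1$. Alongside this I should confirm that this interior maximum dominates the boundary value $g(1)=y-1/2$ (for $y\le 1$ the dominance $y^2/2\ge y-1/2$ follows from $(y-1)^2\ge 0$). The exponents handle themselves automatically via $p/(p-1)=q$; it is only this bookkeeping of the constants that requires care, and it is where the whole identity turns.

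Finally I would dispatch the degenerate case $p=1$, $q=\infty$ separately, since there is no interior stationary point on $(1,\infty)$. Here $\varphi_1(x)=|x|-1/2$ for $|x|>1$, so on $(1,\infty)$ one has $g(x)=x(y-1)+1/2$, whose supremum is $y-1/2$ when $y\le 1$ but which tends to $+\infty$ when $y>1$. Combining this with the $[0,1]$ computation yields $\varphi_1^\ast(y)=y^2/2$ for $|y|\le 1$ and $\varphi_1^\ast(y)=\infty$ for $|y|>1$, which is exactly $\varphi_\infty$, completing the proof.
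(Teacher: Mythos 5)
Your proof is correct and follows essentially the same route as the paper: both compute the conjugate by solving the stationarity condition $\varphi_p'(x)=y$ piecewise (the paper phrases this as the classical Legendre formula $\varphi_p^\ast(y)=y[\varphi_p']^{-1}(y)-\varphi_p([\varphi_p']^{-1}(y))$), match the constants via $1/p-1/2=1/2-1/q$, and treat $p=1$, $q=\infty$ separately by noting the supremum is infinite for $y>1$. Your additional checks (which piece the maximizer lands in, and dominance over the boundary value $g(1)$) are sound but do not change the argument.
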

\begin{proof}
Since $\varphi_p$ are even, convex and differentiable functions, it suffices to consider $x\ge 0$ and use the classical Legendre formula to get the form of $\varphi_p^\ast$ for $p>1$.
And so 
$$
\varphi_p^\prime(x)=\left\{
\begin{array}{ccl}
x, & {\rm if} & 0\le x\le 1,\\
x^{p-1}, & {\rm if} & x >1.
\end{array}
\right.
$$
Solving the equation $\varphi_p^\prime(x)=y$ we get 
$$
[\varphi_p^\prime]^{-1}(y)=\left\{
\begin{array}{ccl}
y, & {\rm if} & 0\le y\le 1,\\
y^\frac{1}{p-1}, & {\rm if} & y >1.
\end{array}
\right.
$$
Legendre's transform of $\varphi_p$ equals $\varphi_p^\ast(y)=y[\varphi_p^\prime]^{-1}(y)-\varphi_p([\varphi_p^\prime]^{-1}(y))$. 
For $0\le y\le 1$, we get $\varphi_p^\ast(y)=y^2-y^2/2=y^2/2$ and for $y >1$ we obtain
$$
\varphi_p^\ast(y)=yy^\frac{1}{p-1}-\frac{1}{p}y^\frac{p}{p-1}+\frac{1}{p}-\frac{1}{2}=\Big(1-\frac{1}{p}\Big)y^\frac{p}{p-1}+\frac{1}{p}-\frac{1}{2}= 
\frac{1}{q}y^q-\frac{1}{q}+\frac{1}{2},
$$
since $1-1/p=1/q$, $p/(p-1)=q$ and $1/p-1/2=-1/q+1/2$.

For $p=1$ and $0\le y\le 1$ we get  $\varphi_1^\ast(y)=y^2/2$ in the same manner as above. When $y>1$, by the definition of the convex conjugate, we have
$\varphi_1^\ast(y)=\sup_{x\in\mathbb{R}}\{xy-\varphi_1(x)\}=\infty$.
\end{proof}

Similarly as in Buldygin and Kozachenko \cite[(4.3) of Ch.2]{BulKoz}, let us define the norm on the spaces $L_{\psi_p}^0$ of the form
$$
\tau_{\varphi_p}(X)=\inf\big\{K>0:\;\forall_{t\in\mathbb{R}}\;\mathbb{E}\exp(tX)\le \exp\varphi_q(Kt)\big\},
$$
where $q=p/(p-1)$ if $p>1$ and $q=\infty$ for $p=1$.
\begin{thm}
\label{mgf}
The norms $\|\cdot\|_{\psi_p}$ and $\tau_{\varphi_p}(\cdot)$ are equivalent on the space $L_{\psi_p}^0$.
\end{thm}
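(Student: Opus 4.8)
The plan is to prove two one-sided comparisons: (B) that $\|X\|_{\psi_p}\le c_1\tau_{\varphi_p}(X)$ and (A) that $\tau_{\varphi_p}(X)\le c_2\|X\|_{\psi_p}$, for constants $c_1,c_2>0$ depending only on $p$; together these give the equivalence $c_1^{-1}\|X\|_{\psi_p}\le\tau_{\varphi_p}(X)\le c_2\|X\|_{\psi_p}$. Both directions route through the tail and moment characterizations of Lemma~\ref{charlem}, and the centering hypothesis $\mathbb{E}X=0$ will be essential in direction (A), since it removes the linear term in the moment generating function and lets its leading behaviour match the quadratic behaviour of $\varphi_q$ near the origin.

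For direction (B) I would fix $K>\tau_{\varphi_p}(X)$, so that $\mathbb{E}\exp(tX)\le\exp\varphi_q(Kt)$ for all $t\in\mathbb{R}$, and apply the Cram\'er--Chernoff bound: for $u>0$,
$$
\mathbb{P}(X\ge u)\le\inf_{t>0}\exp\big(\varphi_q(Kt)-tu\big)=\exp\Big(-\sup_{t>0}\big(tu-\varphi_q(Kt)\big)\Big).
$$
Substituting $s=Kt$ and using that $\varphi_q$ is even, the supremum equals $\varphi_q^\ast(u/K)$, and by Lemma~\ref{conjphi} together with biconjugation $\varphi_q^\ast=\varphi_p^{\ast\ast}=\varphi_p$ (valid since $\varphi_p$ is closed and convex, and covering $p=1$ via $\varphi_\infty^\ast=\varphi_1$) this is exactly $\varphi_p(u/K)$. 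The same estimate applied to $-X$ controls the left tail, giving $\mathbb{P}(|X|\ge u)\le 2\exp(-\varphi_p(u/K))$. Using the elementary inequality $\varphi_p(x)\ge|x|^p/(2p)$ for all large $x$, and absorbing the small-$u$ range into a larger constant (there the bound is trivial, the right-hand side exceeding $1$), I obtain condition~2 of Lemma~\ref{charlem} with $L$ a fixed multiple of $K$. Remark~\ref{mrem} then yields $\|X\|_{\psi_p}\le 3^{1/p}L\le c_1 K$, and taking the infimum over such $K$ gives $\|X\|_{\psi_p}\le c_1\tau_{\varphi_p}(X)$.

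For direction (A) I would fix $K>\|X\|_{\psi_p}$ and invoke the moment bound of Lemma~\ref{charlem} ($1\Rightarrow3$), namely $\mathbb{E}|X|^n\le 2K^n\Gamma(n/p+1)$. Expanding the moment generating function and discarding the linear term by centering,
$$
\mathbb{E}\exp(tX)=1+\sum_{n=2}^\infty\frac{t^n\mathbb{E}X^n}{n!}\le 1+2\sum_{n=2}^\infty\frac{\Gamma(n/p+1)}{n!}(|t|K)^n,
$$
where the interchange of expectation and summation is justified by absolute convergence of the moment series (for every $t$ when $p>1$, and for $|t|K<1$ when $p=1$, matching the domain of $\varphi_\infty$). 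It then remains to produce a constant $c_2$ with the right-hand side at most $\exp\varphi_q(c_2Kt)$ for all $t$.

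This last series estimate is the heart of the matter and the step I expect to be the main obstacle. Writing $s=|t|K$, a Stirling estimate gives $\Gamma(n/p+1)/n!\approx(e/n)^{n/q}p^{-n/p}$ up to polynomial factors, so the maximal term of $\sum_n\frac{\Gamma(n/p+1)}{n!}s^n$ is attained near $n\approx(cs)^q$ and has magnitude $\exp\big((cs)^q/q\big)$; a Laplace-type summation then yields $\sum_{n\ge2}\frac{\Gamma(n/p+1)}{n!}s^n\le C\exp\big((cs)^q/q\big)$. Since $\varphi_q(c_2Kt)\sim(c_2s)^q/q$ for large $s$ and equals $(c_2s)^2/2$ near $0$, I would split into two regimes: for small $s$ the dominant quadratic term $2\Gamma(2/p+1)s^2$ is absorbed by $\exp((c_2s)^2/2)$ once $c_2$ is large, while for large $s$ the exponential-order match above closes the estimate, enlarging $c_2$ to reconcile constants and polynomial factors uniformly. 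This gives $\tau_{\varphi_p}(X)\le c_2\|X\|_{\psi_p}$, and combining with (B) produces the asserted equivalence.
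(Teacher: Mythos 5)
Your proposal is correct, and its overall architecture coincides with the paper's: direction (B) (Chernoff bound, identification of $\sup_{s>0}\{su-\varphi_q(Ks)\}$ with $\varphi_p(u/K)$ via Lemma~\ref{conjphi}, conversion of the $\varphi_p$-tail into a $|x|^p$-tail, and Remark~\ref{mrem}) is essentially identical to the paper's second half, including the trick of absorbing the small-$u$ range where the bound exceeds $1$; your explicit appeal to biconjugation to cover $\varphi_\infty^\ast=\varphi_1$ is in fact slightly more careful than the paper's bare assertion. The genuine divergence is in the large-$|t|$ regime of direction (A). Both you and the paper treat small $|t|$ by expanding the moment generating function, killing the linear term by centering, bounding $\Gamma(\alpha/p+1)\le\alpha!$, and summing a geometric series to get $\exp(\varphi_q(cKt))$ on $|t|\lesssim 1/K$ (which already finishes $p=1$, since $\varphi_\infty=\infty$ outside $[-1,1]$). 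For $p>1$ and large $|t|$, however, the paper avoids any asymptotic analysis of the moment series: it applies Young's inequality pointwise, $tX\le L^q|t|^q/q+|X|^p/(pL^p)$, so that $\mathbb{E}\exp(tX)\le\exp(L^q|t|^q/q)\,\mathbb{E}\exp(|X|^p/(pL^p))$, bounds the second factor by $e^{1/2}$ for $L\ge(8/p)^{1/p}K$ using the same series device, and absorbs the additive $1/2$ into $\varphi_q(2\sqrt{2}Kt)-1/q+1/2$; this yields clean explicit constants such as $[(8/p)^{q/p}+(2\sqrt 2)^q]^{1/q}$. You instead attack $\sum_n\frac{\Gamma(n/p+1)}{n!}s^n$ directly by Stirling and a Laplace-type maximal-term argument; your asymptotics are right (the maximal term sits near $n\approx s^q p^{-q/p}$ and has magnitude $\exp(s^q/(qp^{q/p}))$, and the polynomial prefactors can be absorbed by enlarging the constant inside the $q$-th power), and the intermediate range of $s$ is handled by compactness as you indicate, so the route closes — but it is more computational, yields less explicit constants, and requires the extra care with uniformity that the paper's Young-inequality decoupling sidesteps entirely.
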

\begin{proof}
Let $K\ge\|X\|_{\psi_p}$. Since $\mathbb{E}\xi=0$, we get
\begin{eqnarray*}
\mathbb{E}\exp(tX) &=& 1+\sum_{\alpha=2}^\infty\frac{\mathbb{E}X^\alpha}{\alpha!}t^\alpha\le 1+\sum_{\alpha=2}^\infty\frac{\mathbb{E}|X|^\alpha}{\alpha!}|t|^\alpha\\
\; &\le& 1+2\sum_{\alpha=2}^\infty\frac{\Gamma(\alpha/p+1)}{\alpha!}K^\alpha|t|^\alpha\quad({\rm by\; property\; 3\; of\; Lemma\; \ref{charlem}})\\
\; &\le& 1+2\sum_{\alpha=2}^\infty K^\alpha|t|^\alpha\quad ({\rm since}\;\Gamma(\alpha/p+1)\le\alpha!\; {\rm for}\;p\ge 1\; {\rm and}\;\alpha\ge 2 )\\
\; &\le& 1+2K^2t^2/(1-K|t|)\le 1 + 4K^2t^2
\end{eqnarray*}
for $|t|\le 1/(2K)$. Using the elementary estimate $1+x\le\exp x$, we get
$$
\mathbb{E}\exp(tX)\le \exp(4K^2t^2) 
$$
for  $|t|\le 1/(2K)$. Since $\varphi_q(x)=x^2/2$ if $|x|\le 1$, we obtain for any $q\in[1,\infty]$
\begin{equation}
\label{est1}
\mathbb{E}\exp(tX)\le \exp(4K^2t^2)= \exp\big(\varphi_q(2\sqrt{2}Kt)\big)
\end{equation}
if $|t|\le 1/(2\sqrt{2}K)$. Because $\varphi_\infty(x)=\infty$ for $|x|>1$ then, in the case $p=1$, the above inequality is valid for any $t$. Taking $K=\|X\|_{\psi_1}$, by the definition of $\tau_{\varphi_1}$, we get
\begin{equation}
\label{case1}
\tau_{\varphi_1}(X)\le 2\sqrt{2}\|X\|_{\psi_1}.
\end{equation}

From now on let $p>1$.
Similarly as above, by the Taylor expansion of the exponential function, property 3 of Lemma \ref{charlem} and elementary inequalities,  one can show that for $s\in \mathbb{R}$
\begin{eqnarray*}
\mathbb{E}\exp(|s|^p|X|^p) &=& 1+\sum_{\alpha=1}^\infty\frac{\mathbb{E}|X|^{p\alpha}}{\alpha!}|s|^{p\alpha}\le 1+2\sum_{\alpha=1}^\infty\frac{\Gamma(\alpha+1)}{\alpha!}K^{p\alpha}|s|^{p\alpha}\\
\; &=& 1+2\sum_{\alpha=1}^\infty K^{p\alpha}|s|^{p\alpha}\le \exp(4K^p|s|^p)\\
\end{eqnarray*}
for $|s|\le 1/(2^{1/p}K)$. Let us note that if $|s|\le 1/(8^{1/p}K)$ then 
\begin{equation}
\label{est3}
\mathbb{E}\exp(|s|^p|X|^p)\le \exp(1/2).
\end{equation}

Let $L>0$. By the elementary Young inequality $xy\le |x|^p/p+|y|^q/q$ ($p,q>1$) we get that for all $t\in\mathbb{R}$ the following estimate holds
$$
\mathbb{E}\exp(tX)\le \exp\big(L^q|t|^q/q\big)\mathbb{E}\exp\Big(|X|^p/(pL^p)\Big).
$$
Assuming that $L\ge (8/p)^{1/p}K$, by (\ref{est3}), we get
$$
\mathbb{E}\exp\Big(|X|^p/(pL^p)\Big)\le \exp(1/2)
$$
and, in consequance,
$$
\mathbb{E}\exp(tX)\le \exp\big(L^q|t|^q/q+1/2\big)
$$
for all $t\in \mathbb{R}$. Because for $|t|>1/(2\sqrt{2}K)$ the function 
$$
\varphi_q(2\sqrt{2}Kt)=|2\sqrt{2}Kt|^q/q-1/q+1/2>1/2
$$
then we get
\begin{eqnarray*}
\mathbb{E}\exp(tX)&\le& \exp\Big(L^q|t|^q/q+\varphi_q(2\sqrt{2}Kt)\Big)\nonumber\\
\; &= & \exp\Big(\big[L^q+(2\sqrt{2}K)^q\big]|t|^q/q-1/q+1/2\Big)\nonumber\\
\end{eqnarray*}
for $|t|>1/(2\sqrt{2}K)$.
Let us notice now that for $|t|>1/[L^q+(2\sqrt{2}K)^q\big]^{1/q}$ we get the following estimate
\begin{equation}
\label{est2}
\mathbb{E}\exp(tX)\le \exp\Big(\varphi_q\Big([L^q+(2\sqrt{2}K)^q\big]^{1/q}t\Big)\Big).
\end{equation}
Since $[L^q+(2\sqrt{2}K)^q\big]^{1/q}\ge 2\sqrt{2}K$, by (\ref{est2}) and (\ref{est1}), the inequality (\ref{est2}) is valid for all $t\in\mathbb{R}$.
Taking $K=\|X\|_{\psi_p}$ and $L=(8/p)^{1/p}\|X\|_{\psi_p}$, by the definition of the norm $\tau_{\varphi_p}$, we obtain  
$$
\tau_{\varphi_p}(X)\le [(8/p)^{q/p}+(2\sqrt{2})^q\big]^{1/q}\|X\|_{\psi_p}.
$$
By the above and (\ref{case1}) we get that the norm $\|\cdot\|_{\psi_p}$ is stronger than the norm $\tau_{\varphi_p}(\cdot)$ for any $p\ge 1$.

Now we prove the opposite relation. From now on let $K\ge \tau_{\varphi_p}(X)$.  Let us take $s>0$. Using the exponential Markov inequality and then the definition of $\tau_{\varphi_p}$ we get
\begin{eqnarray*}
\mathbb{P}(X\ge t)=\mathbb{P}\Big(e^{s X}\ge e^{s t}\Big)&\le& e^{-s t}\mathbb{E}e^{sX} \le e^{-s t}e^{\varphi_q(Ks)}\\
\; &=& e^{-\{s t-\varphi_q(Ks)\}}.
\end{eqnarray*}
Taking now infimum over $s>0$ we obtain
$$
\mathbb{P}(X\ge t)\le e^{-\sup_{s>0}\{s t-\varphi_q(Ks)\}}=e^{-\varphi_p(t/K)},
$$
since $\varphi_q$ is an even function, it follows that $\sup_{s>0}\{s t-\varphi_q(Ks)\}=\varphi^\ast_q(t/K)=\varphi_p(t/K)$.
In a similar way we obtain that $\mathbb{P}(-X\ge t)\le \exp\{-\varphi_p(t/K)\}$ and, in consequence, the following estimate
$$
\mathbb{P}(|X|\ge t)\le 2e^{-\varphi_p(t/K)}
$$
holds. 

In the case of $p\ge 2$, the function $|x|^p/p\le\varphi_p(x)$ for all $x\in\mathbb{R}$, hence 
$$
\mathbb{P}(|X|\ge t)\le 2e^{-\varphi_p(t/K)}\le 2e^{-\big(t/(p^{1/p}K)\big)^p}
$$
for all $t>0$. It means that in property 2 of Lemma \ref{charlem} one can take $L=p^{1/p}K$. Recall that for $1\le p <2$, the function $|x|^p/p\ge\varphi_p(x)$ but
$|x|^p/2\le \varphi_p(x)$ for $x\ge 1$. It gives that 
$$
\mathbb{P}(|X|\ge t)\le 2e^{-\varphi_p(t/K)}\le 2e^{-\big(t/(2^{1/p}K)\big)^p}
$$
for $t\ge K$. Observe that for $t=K$ the above right hand side equals $2e^{-1/2}$ and is greater than 1 and it follows that the above estimate is valid also for $0<t<K$ that is held for all $t$. It means that for $1\le p <2$ in property 2 of Lemma \ref{charlem} one can take $L=2^{1/p}K$. By Remark \ref{mrem} we get 
$$
\|X\|_{\psi_p}\le 3^{1/p}L_p\tau_p(X),
$$
where $L_p=p^{1/p}$ if $p\ge 2$ and $L_p=2^{1/p}$ for $1\le p <2$. It finishes the proof. 

\end{proof}
\begin{rem}
In the above theorem we have shown the equivalence relations of the norms on $L_{\psi_p}^0$-spaces for any $p\ge 1$. The case $p=1$ has a specific character, since to define the $\tau_{\varphi_1}$-norm we have used the function $\varphi_\infty$ that is not formally $C$-function. It takes infinite values. 
Although in Vershynin \cite[Prop. 2.7.1]{Ver} one can find some form of the norms equivalent relation (equivalence of conditions 4 and 5)
but without, in my opinion, the important notion of the $\tau_{\varphi_1}$-norm.

\end{rem}

In the following example we present an application of our approach to complementing classical Hoeffding's inequality.
\begin{exa}
Let $X$ be a centered random variable bounded by some positive constant $a>0$, i.e., $\mathbb{P}(|X|\le a)=1$ and $\mathbb{E}X=0$. The Hoeffding Lemma gives inequality
\begin{equation}
\label{est4}
\mathbb{E}\exp(tX)\le \exp(a^2t^2/2),
\end{equation}
which means that $\tau_{\varphi_2}(X)\le a$. This inequality leads to a tail estimate of the form
\begin{equation}
\label{Hoefest}
\mathbb{P}(|X|\ge t)\le 2\exp\big(-t^2/(2a^2)\big),
\end{equation}
which is not sharp. Let us observe that for $t>a$ we have $\mathbb{P}(|X|\ge t)=0$. We propose another form of such estimates. Recall that bounded random variables belong to $L_{\psi_p}$ for all $p\ge 1$. Since $\varphi_q(x)\ge x^2/2=\varphi_2(x)$, for $q\ge 2$, we have that 
$
\mathbb{E}\exp(tX)\le \exp(\varphi_q(at)),
$
for all $t\in \mathbb{R}$. Suppose now $1<q<2$ ($p>2$).
  Mimicking the proof of Theorem \ref{mgf}, inequality (\ref{est4}) implies that for all $q\in (1,2)$ and only $|t|\le 1/a$ 
$$
\mathbb{E}\exp(tX)\le  \exp\big(\varphi_q(at)\big);
$$
compare with (\ref{est1}). The function $\mathbb{E}\exp(|s|^p|X|^p)$ is less or equal $\exp(|s|^pa^p)$ for all $s\in \mathbb{R}$ and for $|s|\le 1/(2^{1/p}a)$ is less or equal $\exp(1/2)$. Similarly as in (\ref{est2}) we get
$$
\mathbb{E}\exp(tX)\le \exp\Big(\varphi_q\Big([(2/p)^{q/p}+1\big]^{1/q}at\Big)\Big),
$$
which means that $\tau_{\varphi_p}(X)\le [(2/p)^{q/p}+1\big]^{1/q}a$. Let us observe that for $p>2$ we have $[(2/p)^{q/p}+1\big]^{1/q}\le 2$, that is, 
$\tau_{\varphi_p(X)}\le 2a$. It gives 
$$
\mathbb{E}\exp(tX)\le  \exp\big(\varphi_q(2at)\big)\quad (1<q<2)
$$
and, in consequence, tail estimates of the form
$$
\mathbb{P}(|X|\ge t)\le 2\exp\big(-\varphi_p(t/(2a))\big)
$$
for $p>2$. Letting $p$ tend to $\infty$, we get
\begin{equation}
\label{Hoefine}
\mathbb{P}(|X|\ge t)\le 2\exp\big(-\varphi_\infty(t/(2a))\big)=
\left\{
\begin{array}{ccl}
2\exp\Big(-\frac{t^2}{8a^2}\Big) & {\rm if} & |t|\le 2a,\\
0 & {\rm if} & |t|>2a.
\end{array}
\right.
\end{equation}
This estimate for $|t|\le 2a$ has a slightly worse character than in Hoeffding's case but for $|t|>2a$ it becomes accurate.

The Hoeffding inequality for sums of independent, zero-mean random variables $(X_k)$ such that $\mathbb{P}(|X_k|\le a_k)=1$, $k=1,2,...,n$, is based on the estimate
$$
\tau_{\varphi_2}\Big(\sum_{k=1}^nX_k\Big)\le \Big(\sum_{k=1}^na_k^2\Big)^{1/2},
$$
see \cite[Cor. 1.1.3]{BulKoz}. Taking in (\ref{Hoefest})  $X=\sum_{k=1}^nX_k$ and $a=(\sum_{k=1}^na_k^2)^{1/2}$ we get the classical Hoeffding inequality. 

Let us emphasize that for any $p\ge 1$ we can only use  the triangle inequality that is true even for  dependent summands $X_i$. Thus for $p>2$ we get the inequality  
$$
\tau_{\varphi_p}\Big(\sum_{k=1}^nX_k\Big)\le \sum_{k=1}^na_k.
$$
Substituting in (\ref{Hoefine}) $X=\sum_{k=1}^nX_k$ and $a=\sum_{k=1}^na_k$ we obtain a complementary form of the Hoeffding inequality for dependent random variables.

\end{exa}

\end{document}